\numberwithin{equation}{section}
\definecolor{DPurple}{rgb}{0.46,0.2,0.69}
\theoremstyle{definition}
\newtheorem{definition}{Definition}[section]
\newtheorem{problem}[definition]{Problem}
\theoremstyle{remark}
\newtheorem{remark}[definition]{Remark}
\theoremstyle{plain}
\newtheorem{theorem}[definition]{Theorem}
\newtheorem{result}[definition]{Result}
\newtheorem{proposition}[definition]{Proposition}
\newcommand{\eps}{\varepsilon}
\newcommand\leb[1]{\mathbb{L}^{{#1}}}
\newcommand{\bcdot}{\boldsymbol{\cdot}}
\newcommand{\lrarw}{\longrightarrow}
\newcommand{\btl}{\blacktriangleleft}
\newcommand{\bdy}{\partial}
\newcommand{\gaux}{\gamma^{{\rm aux}}}
\newcommand{\sigg}{\sigma_{\gamma}}
\newcommand{\hrm}{\boldsymbol{{\sf h}}}
\newcommand{\intv}{\mathscr{I}}
\newcommand{\Z}{\mathbb{Z}}
\newcommand{\N}{\mathbb{N}}
\newcommand{\Cn}{\mathbb{C}^n}
\newcommand{\R}{\mathbb{R}}
\newcommand{\re}{{\sf Re}}
\newcommand{\im}{{\sf Im}}
\newcommand{\wt}{\widetilde}
\begin{document}

\title[Non-smooth unit speed paths]{Non-smooth paths having unit speed with respect to \\
the Kobayashi metric, and their applications}

\author{Gautam Bharali}
\address{Department of Mathematics, Indian Institute of Science, Bangalore 560012, India}
\email{bharali@iisc.ac.in}

\author{Rumpa Masanta}
\address{Theoretical Statistics and Mathematics Unit, Indian Statistical Institute, Bangalore Centre,
Bangalore 560059, India}
\email{rumpa\_ra@isibang.ac.in}

\dedicatory{Dedicated to Prof.~Josip Globevnik on the occasion of his 80th birthday}

\begin{abstract}
We investigate the question of whether a non-constant absolutely continuous path can be
reparametrized to be of unit speed with respect to the Kobayashi metric and be absolutely continuous. Even
when the answer is ``Yes,'' which isn't always the case, its proof involves some subtleties. We answer the
above question and discuss several applications.
\end{abstract}

\keywords{Almost-geodesics, chord-arc curves, geodesics, Kobayashi length, notions of visibility}
\subjclass[2020]{Primary: 32F45, 32Q45; Secondary: 53C23}

\maketitle

\vspace{-7.5mm}
\section{Introduction}\label{S:intro}
Consider a complex manifold $X$ and assume that $X$ is Kobayashi hyperbolic (i.e., that the Kobayashi
pseudodistance $K_X$ is a distance). Let $\gamma:[0,T]\lrarw X$ be an absolutely continuous path. We introduce a function $g$ such that $(g(t)-g(s))$ gives the $k_X$-length of $\gamma|_{[s,t]}$, $s<t\in [0,T]$:
\[
  g(t):=\int_{0}^{t} k_X(\gamma(s);\gamma'(s))\,ds \quad \forall t\in[0,T],
\]
where $k_X$ denotes the Kobayashi metric on $X$. We defer to Section~\ref{S:met} the discussion on what it means
for $\gamma$ to be absolutely continuous in our general setting, what $\gamma'$ means, etc.
\smallskip

This paper is motivated by the following questions (in what
follows, if $\gamma:[0,T]\lrarw X$ is a path, then we will denote the image of $\gamma$ by
$\langle\gamma\rangle$):

\begin{problem}\label{pro:questions}
Let $X$ be a Kobayashi hyperbolic complex manifold. Let $\gamma: [0,T]\lrarw X$ be a non-constant absolutely continuous path.
\begin{itemize}[leftmargin=25pt]
  \item[$(a)$] Can $\gamma$ be reparametrized with respect to its $k_{X}$-length as 
  an absolutely continuous path?
  \item[$(b)$] Does there exist an absolutely continuous path $\Gamma: [0,\tau]\lrarw  X$ such that
  $\langle\gamma\rangle = \langle\Gamma\rangle$ and can be reparametrized with respect to
  its $k_{X}$-length as an absolutely continuous path?
\end{itemize}
\end{problem}

A part of the motivation for Problem~\ref{pro:questions} is that this
seemingly simple problem runs into measure-theoretic subtleties, some of which aren't immediately
evident; see Remark~\ref{rem:subtlety}. Unsurprisingly, the answer to $(a)$ is, in general,
``No.'' But even when the answer to $(a)$ is ``Yes,'' establishing this is non-trivial
because we insist on the reparametrization to be absolutely continuous. Here, one of
the obstacles is evident: even if $g$ is
invertible, $g^{-1}$ is not always absolutely continuous. In this regard,
Problem~\ref{pro:questions} is \emph{very different} from its Riemannian analogue.
\smallskip

A deeper motivation for
Problem~\ref{pro:questions} is that its solution would be a very useful
tool\,---\,analogous to unit-speed reparametrization in Riemannian geometry\,---\,leading to a range
of applications. The applications that interest us relate to notions of negative curvature for the
metric space $(X,K_X)$. The first notion is that of Gromov hyperbolicity of $(X,K_X)$, on which there has
been a lot of work recently: see, for instance,
\cite{baloghbonk:GhKmspd00, zimmer:GhKmcdft16, fiacchi:GhpfdC222}. The second notion is that of
visibility, which is a notion that involves domains $\Omega\varsubsetneq X$, with $\Omega$ being Kobayashi
hyperbolic. Visibility is a weak notion of negative curvature introduced by Bharali--Zimmer for Kobayashi
hyperbolic domains $\Omega\varsubsetneq\Cn$  \cite{bharalizimmer:gdwnv17, bharalizimmer:gdwnv23}\,---\,also
see \cite{bharalimaitra:awnovfoewt21} by Bharali--Maitra. Very recently, this notion was extended to Kobayashi
hyperbolic domains $\Omega\varsubsetneq X$ \cite{masanta:vdekdcm24}, $X$ being any complex manifold. 
Very roughly, the visibility property requires that all geodesics with end-points close to two distinct points
in $\bdy\Omega$ must bend uniformly into $\Omega$. For a \emph{non-compact} Kobayashi hyperbolic complex
manifold $X$, where $\dim_{\mathbb{C}}(X)\geq 2$, it is, in general, unclear when the metric space $(X,K_X)$
is Cauchy-complete\,---\,even when $X\varsubsetneq\Cn$ is a pseudoconvex domain\,---\,and, consequently,
whether $(X, K_{X})$ is a geodesic space. Thus, to study either notion of negative curvature
when $(X,K_X)$ is \textbf{not} assumed to be Cauchy-complete, one works with a class of
absolutely continuous quasi-geodesics; it is known that, given $x,y\in X$, $x\neq y$, there always 
exists such a quasi-geodesic joining $x$, $y$ \cite{bharalizimmer:gdwnv17, masanta:vdekdcm24}. This is where
absolutely continuous paths enter the discourse. We will elaborate upon this presently and discuss
a few of the above-mentioned applications in detail in Section~\ref{S:appl}. But, first, let us state a
theorem that answers the questions in Problem~\ref{pro:questions}.

\begin{theorem}[\textsc{Reparametrization Lemma}]\label{T:unit-speed}
Let $X$ be a Kobayashi hyperbolic complex manifold. Let $\gamma: [0,T]\lrarw X$ be a non-constant absolutely
continuous path. Then, there exists an absolutely continuous path $\sigg: [0,\tau] \lrarw X$ such that
$\langle\gamma\rangle = \langle\sigg\rangle$ and such that
$k_{X}(\sigg(t); \sigg'(t)) = 1$
for almost every $t\in [0,\tau]$. Writing $\mathscr{S}(\gamma) := \{t\in [0,T] : \gamma'(t) = 0\}$:
\begin{itemize}[leftmargin=25pt]
  \item[$(a)$] $\gamma$ can be reparametrized with respect to its $k_{X}$-length as an absolutely
  continuous path if and only if $\mathscr{S}(\gamma)$ contains no intervals of positive length.
  \item[$(b)$] When $\mathscr{S}(\gamma)$ contains no intervals of positive length, the path
  $\sigg$ is obtained by reparametrizing $\gamma$ with respect to its $k_{X}$-length.
\end{itemize}
\end{theorem}

The connection between Theorem~\ref{T:unit-speed} and the notions of negative curvature mentioned above are
given by \emph{$(\lambda,\kappa)$-chord-arc curves} and \emph{$(\lambda,\kappa)$-almost-geodesics}\,---\,see
Section~\ref{S:met} for definitions. The role of $(\lambda,\kappa)$-chord-arc curves is well established in
the study of Gromov hyperbolic spaces. For $X$ as described at the top of this section,
given a $\kappa>0$, for each pair $x,y\in X$, $x\neq y$, there exists a
$(1,\kappa)$-almost-geodesic joining $x$ and $y$; see \cite[Proposition~4.4]{bharalizimmer:gdwnv17},
\cite[Proposition~2.8]{masanta:vdekdcm24}. These serve as
substitutes for geodesics when $(X, K_X)$ is not a geodesic space. It would be useful to know whether
the two classes of paths coincide in some
appropriate sense. Answering this is one of the applications of Theorem~\ref{T:unit-speed};
see Section~\ref{S:appl}.
\smallskip

Another context in which $(\lambda,\kappa)$-chord-arc curves feature is in the study of
$(\lambda,\kappa)$-visible points of $\bdy\Omega$ for a Kobayashi hyperbolic domain $\Omega\varsubsetneq \Cn$.
This context provides a concrete instance of the usefulness of knowing whether the class of
$(\lambda,\kappa)$-chord-arc curves coincides with the class of $(\lambda,\kappa)$-almost-geodesics in any
sense. A point $p\in \bdy\Omega$ is a $(\lambda,\kappa)$-visible point, $\lambda\geq 1$ and $\kappa\geq 0$, if,
roughly speaking, all $(\lambda,\kappa)$-chord-arc curves originating sufficiently close to $p$ initially bend
away from $\bdy\Omega$ uniformly. This is a notion introduced by Nikolov \emph{et al.} 
\cite{nikolovoktenthomas:lgnvwrKdc24}. For the goals of \cite{nikolovoktenthomas:lgnvwrKdc24}, it is
essential to show that all $(\lambda,\kappa)$-chord-arc curves (which are called 
\emph{$(\lambda,\kappa)$-geodesics} in \cite{nikolovoktenthomas:lgnvwrKdc24}) are
$(\lambda,\kappa)$-almost-geodesics in some appropriate sense. Nikolov \emph{et al.} sketch how to do so
in \cite[Section~2]{nikolovoktenthomas:lgnvwrKdc24}
via a argument that claims that when $\Omega$ is Kobayashi
hyperbolic, then any absolutely continuous path in $\Omega$ can be reparametrized with respect to its
$k_{\Omega}$-length as an absolutely continuous path. In this, they appeal to Bharali--Zimmer
\cite{bharalizimmer:gdwnv17}. However:
\begin{itemize}[leftmargin=30pt]
  \item Given the observations above in connection with part~$(a)$ of 
  Problem~\ref{pro:questions}, the
  above-mentioned argument in \cite{nikolovoktenthomas:lgnvwrKdc24} is in need of further details.
  \item The arguments of Bharali--Zimmer in \cite{bharalizimmer:gdwnv17} do not suffice to provide these
  details.
\end{itemize}
The details are provided by our Reparametrization Lemma: i.e., Theorem~\ref{T:unit-speed}.
See Section~\ref{S:appl} for the precise relationship between the class of
$(\lambda,\kappa)$-chord-arc curves and the class of $(\lambda,\kappa)$-almost-geodesics. We should
mention that this relationship was first established in a preliminary, and unpublished, version of this
work; see \cite[Corollary~1.3]{bharalimasanta:nsphuswrtKm24}.
\smallskip

The proof of Theorem~\ref{T:unit-speed} is given in Section~\ref{S:unit-speed}. We also have some other
applications of Theorem~\ref{T:unit-speed}, separate from the preceding discussion. These are presented
in Section~\ref{S:appl}.
\medskip

\section{Metrical preliminaries}\label{S:met}
We shall first elaborate upon several concepts mentioned in Section~\ref{S:intro}
whose definitions had been deferred. Our notation is borrowed liberally from \cite{masanta:vdekdcm24}.

\begin{definition}\label{D:abs-cont}
Let $X$ be a complex manifold of dimension $n$. A path $\gamma: I\lrarw X$, where $I\subseteq \R$ is an
interval, is said to be \emph{locally absolutely continuous} if for each $t_0\in I$ and each
holomorphic chart $(U,\varphi)$ around $\gamma(t_0)$, with $I(\varphi,t_0)$ denoting any closed
and bounded interval in
$\gamma^{-1}(U)$ containing $t_0$, $\varphi\circ\gamma|_{I(\varphi,t_0)}$ is absolutely continuous as a path
in $\R^{2n}$. The path $\gamma: I\lrarw X$ is said to be \emph{absolutely continuous} if
$I\varsubsetneq \R$ is a closed and bounded interval and $\gamma$ is locally absolutely continuous.
\end{definition}

From the fact that every locally absolutely continuous path in $\R^{2n}$ is almost-everywhere
differentiable, it follows from Definition~\ref{D:abs-cont} that the same is true for the paths defined
therein. For such a path $\gamma: I\lrarw
X$, if $t\in I$ is such that $D\gamma(t)$ exists, then there is a canonical identification of the vector
$D\gamma(t)1\in T_{\gamma(t)}X$ with a vector in $T{^{(1,0)}_{\gamma(t)}}X$. We denote the latter by
$\gamma'(t)$.
\smallskip

We can now define the two classes of paths mentioned several times in Section~\ref{S:intro}

\begin{definition}\label{D:almost-geodesic}
Let $X$ be a Kobayashi hyperbolic complex manifold. Let $I\subseteq \R$ be an interval. For
$\lambda\geq 1$ and $\kappa\geq 0$, a path $\sigma: I\lrarw X$ is called
a \emph{$(\lambda, \kappa)$-almost-geodesic} if 
\begin{itemize}[leftmargin=25pt]
 \item[$(a)$] for all $s,t\in I$
 \[
   \frac{1}{\lambda}|s-t|-\kappa\leq K_X(\sigma(s),\sigma(t))\leq \lambda|s-t|+\kappa,
 \]
 \item[$(b)$] $\sigma$ is locally absolutely continuous (whence $\sigma'(t)$ exists for almost
 every $t\in I$),
 and for almost every $t\in I$, $k_X(\sigma(t);\sigma'(t))\leq\lambda$.
\end{itemize}
\end{definition}

\begin{definition}\label{D:chord-arc}
Let $X$ be a Kobayashi hyperbolic complex manifold. Let $I\subseteq \R$ be an interval. For
$\lambda\geq 1$ and $\kappa\geq 0$, a path $\sigma: I\lrarw X$ is called
a \emph{$(\lambda, \kappa)$-chord-arc curve} if $\sigma$ is locally absolutely continuous and
\[
  l_{X}(\sigma|_{[s,t]}) \leq \lambda K_{X}(\sigma(s), \sigma(t)) + \kappa
  \quad \text{for all $[s,t]\subseteq I$.}
\]
Here, $l_{X}(\bcdot)$ denotes the $k_{X}$-length of the arc in question.
\end{definition}

We must define one other term mentioned in Section~\ref{S:intro}.

\begin{definition}\label{D:geod}
Let $X$ be a Kobayashi hyperbolic complex manifold. Let $I\subseteq \R$ be an interval. A path
$\sigma: I\lrarw X$ is called a \emph{geodesic} if
\[
  K_X(\sigma(s),\sigma(t)) = |s-t|
\]
for all $s,t\in I$.    
\end{definition}

The following class of paths will feature in our discussion of applications in Section~\ref{S:appl}  

\begin{definition}\label{D:hyp-geo}
Let $X$ be a Kobayashi hyperbolic complex manifold. Let $I\subseteq \R$ be an interval. A path
$\sigma: I\lrarw X$ is called a \emph{length-minimizing geodesic} or a \emph{hyperbolic geodesic} if $\sigma$
is locally absolutely continuous and
\[
  K_X(\sigma(s), \sigma(t)) = \int_s^t k_X(\sigma(u); \sigma'(u))\,du \quad
  \text{for all $[s,t]\subseteq I$.}
\]
\end{definition}

As hinted at in Section~\ref{S:intro}, it is not always the case that for any pair of points $x,y\in X$,
with $x\neq y$\,---\,where $X$ is as in the last three definitions\,---\,there exists a geodesic joining
$x$ and $y$. An analogous statement can be made about the existence of length-minimizing geodesics joining
$x$ and $y$. The following results address these issues. In what follows (and in later sections), the
word ``path'' will refer to a continuous map of an interval $I\subseteq \R$ into the topological space
featured in the discussion.
\smallskip

Given a topological space $X$, we refer the reader to
\cite[Sections~2.1 and~2.2]{buragoburagoivanov:cmg01} for a definition of a class of
\emph{admissible paths} in $X$, say $\mathscr{A}$, and of a \emph{length structure} $(X,\mathscr{A}, L)$ on
$X$, where $L: \mathscr{A}\lrarw [0, +\infty]$ is understood to be the length of each path 
$\gamma\in \mathscr{A}$. If $(X,\mathscr{A}, L)$ is such that
\begin{enumerate}[leftmargin=25pt]
  \item for each $x,y\in X$ with $x\neq y$, there exists a path $\gamma: [0,T]\lrarw X$ in $\mathscr{A}$
  such that $\gamma(0)=x$ and $\gamma(T)=y$; and
  \item $L: \mathscr{A}\lrarw [0,+\infty)$;
\end{enumerate}
then the function $d_L: X\times X\lrarw [0,+\infty)$ given by
\[
  d_L(x,y) := \inf\{L(\gamma)\mid \gamma: [0,T_{\gamma}]\lrarw X \text{ belongs to $\mathscr{A}$ and }
  \gamma(0)=x, \ \gamma(T_{\gamma})=y\}
\]
is (owing to the definition of admissibility) a distance on $X$. A metric space $(X,d)$ is called a
\emph{length space} if $d$ equals the distance $d_L$ associated with some length
structure $(X,\mathscr{A}, L)$ with the properties~$(1)$ and~$(2)$ above.
\smallskip

We have the following fundamental theorem:

\begin{result}[paraphrasing {\cite[Theorem~2.5.23]{buragoburagoivanov:cmg01}}]\label{R:l_sp}
Let $(X,d)$ be a length space and let $(X,\mathscr{A}, L)$ be an associated length structure.
Assume $(X,d)$ is Cauchy-complete and locally compact. Then, given any $x,y\in X$, $x\neq y$, there
exists a path $\sigma: [0,T]\lrarw X$ in $\mathscr{A}$ with $\sigma(0)=x$ and $\sigma(T)=y$ such that
$L(\sigma|_{[s,t]}) = d(\sigma(s), \sigma(t))$ for all $s, t\in [0,T]$ with $s\leq t$.
\end{result}

\begin{remark}
Given $(X,d)$ and $x,y\in X$ as in the above statement, \cite[Theorem~2.5.23]{buragoburagoivanov:cmg01}
asserts the existence of a $\sigma: [0,T]\lrarw X$ in $\mathscr{A}$ with $\sigma(0)=x$ and $\sigma(T)=y$
such that $L(\sigma) = d(x,y)$. The seemingly stronger conclusion of Result~\ref{R:l_sp} is an automatic
consequence of the properties of $\mathscr{A}$; see
\cite[pages~26 and~48]{buragoburagoivanov:cmg01}.
\end{remark}

It follows from results in \cite{venturini:pprcm89} that if $X$ is a Kobayashi
hyberbolic manifold, then $(X,\mathscr{A}_{AC},l_{X})$\,---\,where $\mathscr{A}_{AC}$ is the class of all absolutely
continuous paths in $X$\,---\,is a length structure and the Kobayashi distance $K_X=d_{l_X}$.
Moreover, the topology induced by $K_{X}$ is the usual topology on $X$, due to which 
$(X, K_X)$ is locally compact. Thus, from Result~\ref{R:l_sp}, we conclude:

\begin{proposition}\label{P:l_minim}
Let $X$ be a Kobayashi hyperbolic manifold and suppose $(X,K_X)$ is Cauchy-complete. Then, given any
$x,y\in X$, $x\neq y$, there exists a length-minimizing geodesic joining $x$ and $y$.
\end{proposition}

The above proposition can also be proved directly \cite{bracci:pers} without appealing to
Result~\ref{R:l_sp}. Interestingly, this relies upon the Reparametrization Lemma (i.e.,
Theorem~\ref{T:unit-speed}).
\smallskip

If we start with a metric space $(X,d)$ with the property that
any two points $x,y\in X$, $x\neq y$, can be joined by a path in $X$ that is rectifiable with respect to
$d$, then if $\mathscr{A}_r$ is
the class of all rectifiable paths in $X$ and, for a rectifiable path $\gamma: [0,T]\lrarw X$, we define
\[
  L_{d}(\gamma) := \sup\Big\{\sum\nolimits_{j=1}^N d(\gamma(s_j), \gamma(s_{j-1})):
                    0=s_0 < s_1 < s_2 <\dots < s_N=T
                    \Big\},
\]
then $(X,\mathscr{A}_r, L_d)$ is a length structure with the properties~$(1)$ and~$(2)$ above. If
$(X,d)$ is a length space where $d$ arises from the length structure $(X,\mathscr{A}_r, L_d)$, then
$(X,d)$ is called a \emph{length metric space}. With these definitions, we can state the following
version of the Hopf--Rinow theorem (see \cite[Chapter~I.3]{bridsonhaefliger:msnpc99}, for instance):
\smallskip

\begin{result}[Hopf--Rinow]\label{R:H-R}
Let $(X,d)$ be a length metric space. If $(X, d)$ is locally compact, then the following are equivalent:
\begin{itemize}[leftmargin=25pt]
  \item[$(a)$] Every bounded set in $X$ is relatively compact. 
  \item[$(b)$] $(X,d)$ is Cauchy-complete. 
\end{itemize}
Either of the above properties implies that given any $x,y\in X$, $x\neq y$, there exists a geodesic of
$(X, d)$ joining $x$ and $y$.
\end{result}

A geodesic of $(X,d)$ is a path having the property stated in Definition~\ref{D:geod} with $d$ replacing
$K_X$. Now, for exactly the same reasons stated prior to Proposition~\ref{P:l_minim}, Result~\ref{R:H-R}
implies the following:

\begin{proposition}\label{P:K_H-R}
Let $X$ be a Kobayashi hyperbolic manifold. The following are equivalent:
\begin{itemize}[leftmargin=25pt]
  \item[$(a)$] Every $K_X$-bounded set in $X$ is relatively compact. 
  \item[$(b)$] $(X,K_X)$ is Cauchy-complete. 
\end{itemize}
Either of the above properties implies that given any $x,y\in X$, $x\neq y$, there exists a geodesic
joining $x$ and $y$.
\end{proposition}

Lastly, we present a result that is used in exactly one place in our proofs but plays an important and interesting role.

\begin{result}[paraphrasing {\cite[Theorem~2]{royden:rkm71}}]\label{R:kob-hyp-cond}
Let $X$ be a complex manifold. Fix a Hermitian metric $\hrm$ on $X$ and let $d_{\hrm}$ denote the
distance induced by $\hrm$. Then, $X$ is Kobayashi hyperbolic if and only if for each
$x\in X$, there exists a constant $c_{x}>0$ and a neighbourhood $U_{x}\ni x$ such that
$k_X(y;v)\geq c_{x}\hrm_y(v)$ for every $v\in T{^{(1,0)}_{y}}X$ and every $y\in U_x$.  
\end{result}
\smallskip

\section{Measure-theoretic preliminaries}~\label{S:measure-theo}
In this section, we gather several standard results in the theory of the Lebesgue measure on $\R$ (with just the
first result needing, perhaps, a remark on its proof). Here, and in subsequent sections, we shall abbreviate the
words ``for almost every $x$'' to ``for a.e. $x$''.

\begin{result}\label{R:deriv}
Let $a<b$ and let $f: [a,b]\lrarw \R$ be a monotone increasing function. Then $f'(x)$ exists for a.e.
$x\in (a,b)$, $f'$ is Borel measurable, and $f'$ is of class $\leb{1}([a,b])$.
\end{result}

\begin{remark}
The usual statement of Result~\ref{R:deriv} focuses on the fact that $f'$ is Lebesgue measurable.
However, its \emph{proof} (see \cite[pages~111--113]{wheedenzygmund:mi77}, for
instance) shows that the set
$B:=\{x\in (a,b) : f'(x) \text{ does not exist}\}$ is a Borel set. Moreover, on
$(a,b)\setminus B$, $f'$ is equal to any one of the Dini derivates (restricted to
$(a,b)\setminus B$), which are Borel measurable. Thus, $f'$, on extending to $[a,b]$ in the usual
way, is Borel measurable. 
\end{remark}

\begin{result}\label{R:zero_meas}
Let $a<b$ and let $f$ be as in Result~\ref{R:deriv}. If $f$ is absolutely continuous, then
$f$ maps sets of (Lebesgue) measure zero to sets of measure zero.
\end{result}

See the proof of \cite[Theorem~7.18]{rudin:RCa87} for a proof of the above result. The final result of this section
is a change-of-variables formula. Since ``change of variables'' can mean different things in different contexts, we
state the formula relevant to this work.

\begin{result}\label{R:change_of_var}
Let $a<b$ and let $\varphi: [a,b]\lrarw [c,d]$ be a monotone increasing, absolutely continuous function such that
$\varphi(a)=c$ and $\varphi(b)=d$. Let $f$ be a non-negative Lebesgue measurable function defined on $[c,d]$. Then
\[
  \int_{c}^d f(u)\,du = \int_{a}^b f\big(\varphi(x)\big)\varphi'(x)\,dx.
\]
\end{result}

The above version of the change-of-variables formula is given in \cite[Chapter~7]{rudin:RCa87}.
\smallskip

\section{The key propositions}\label{S:key}
This section is devoted to a pair of results that constitute the proof of Theorem~\ref{T:unit-speed}. For any set
$S\subseteq \R$ that is Lebesgue measurable, $|S|$ will denote its Lebesgue measure.
\smallskip

The obstacle to reparametrizing the path appearing in Theorem~\ref{T:unit-speed} by its $k_{X}$-length becomes evident in the following result.

\begin{proposition}\label{P:aux}
Let $X$ be a Kobayashi hyperbolic complex manifold of dimension $n$. Let $\gamma: [0,T]\lrarw X$ be a non-constant
absolutely continuous path. Let
\[
  \mathscr{S}(\gamma) := \{t\in [0,T] : \gamma'(t) = 0\},
\]
and suppose $|\mathscr{S}(\gamma)| > 0$. If $\mathscr{S}(\gamma)$ contains intervals of positive length, then there
exists an auxiliary path $\gaux: [0,\tau]\lrarw X$ (where $\tau<T$), satisfying
$\langle\gaux \rangle = \langle\gamma \rangle$, that is absolutely continuous and such that the set
$\{t\in [0,\tau] : (\gaux)'(t) = 0\}$ has empty interior. 
\end{proposition}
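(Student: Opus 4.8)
The plan is to treat $(a)$ by a soft disjointness argument and $(b)$ by \emph{collapsing} the flat intervals of $\gamma$ via a Lipschitz change of parameter, the delicate point being that this reparametrisation is done in the ``wrong'' direction and so its absolute continuity is not automatic.

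For $(a)$, the maximal intervals of positive length contained in $\mathscr{S}(\gamma)$ are pairwise disjoint (the union of two overlapping such intervals would be a strictly larger interval inside $\mathscr{S}(\gamma)$, contradicting maximality), and any pairwise-disjoint family of non-degenerate open intervals in $\R$ is at most countable since each contains a distinct rational. Equivalently, these maximal intervals are precisely the connected components of the open set $E:=\mathrm{int}\,\mathscr{S}(\gamma)$, of which there are at most countably many.

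For $(b)$, write $E=\mathrm{int}\,\mathscr{S}(\gamma)=\bigsqcup_n I_n$, where the $I_n$ are the (at most countably many, by $(a)$) disjoint open component intervals, and set $F:=[0,T]\setminus E$ and $m:=|E|$. The hypothesis that $\mathscr{S}(\gamma)$ contains intervals of positive length gives $m>0$, while $\gamma$ being non-constant forces $m<T$ (otherwise $\gamma'=0$ a.e., making $\gamma$ constant). On each $I_n$ we have $\gamma'\equiv 0$, so absolute continuity makes $\gamma$ constant on $\overline{I_n}$. I would set $\tau:=T-m\in(0,T)$ and define the collapsing map $h:[0,T]\lrarw[0,\tau]$ by $h(t):=t-|E\cap[0,t]|$; this $h$ is continuous, nondecreasing and surjective, with $h'=\chi_F$ a.e. Since the level sets of $h$ are either singletons or the closed intervals $\overline{I_n}$, and $\gamma$ is constant on each such level set, the prescription $\gaux\circ h=\gamma$ defines $\gaux:[0,\tau]\lrarw X$ unambiguously; it satisfies $\langle\gaux\rangle=\langle\gamma\rangle$ and is continuous because $h$ is a continuous surjection from a compact space, hence a quotient map.

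The crux\,---\,and the main obstacle\,---\,is to show $\gaux$ is absolutely continuous. Since absolute continuity is checked chartwise (Definition~\ref{D:abs-cont}), I would reduce to the Euclidean assertion: if $f:[c,d]\lrarw\R^{N}$ is absolutely continuous with $f'=0$ a.e.\ on $E$, and $f_{\rm aux}$ is defined by $f_{\rm aux}\circ h=f$, then $f_{\rm aux}$ is absolutely continuous on $h([c,d])$. The naive strategy of pulling back a family of small intervals of $[0,\tau]$ to $[0,T]$ fails, because the collapsed mass makes the pulled-back intervals arbitrarily long; one must integrate rather than sum increments. Given disjoint intervals $(\alpha_i,\beta_i)\subseteq h([c,d])$, choose the canonical preimages $s_i:=\inf h^{-1}(\alpha_i)$ and $t_i:=\inf h^{-1}(\beta_i)$, so that the $[s_i,t_i]$ are non-overlapping, $h(s_i)=\alpha_i$, $h(t_i)=\beta_i$, and
\[
  |f_{\rm aux}(\beta_i)-f_{\rm aux}(\alpha_i)|=|f(t_i)-f(s_i)|\leq\int_{[s_i,t_i]}|f'|\,dr=\int_{[s_i,t_i]\cap F}|f'|\,dr,
\]
the last equality holding because $f'=0$ a.e.\ on $E$. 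Summing and using $\sum_i|[s_i,t_i]\cap F|=\sum_i\big(h(t_i)-h(s_i)\big)=\sum_i(\beta_i-\alpha_i)$ bounds the total by $\int_W|f'|\,dr$ for a set $W\subseteq F$ with $|W|=\sum_i(\beta_i-\alpha_i)$; absolute continuity of the Lebesgue integral of $|f'|\in\leb{1}$ then supplies the required smallness, proving $f_{\rm aux}$, and hence $\gaux$, absolutely continuous. Finally, to see that $\{t\in[0,\tau]:(\gaux)'(t)=0\}$ has empty interior I would argue by contradiction: if $(\gaux)'\equiv 0$ on some interval $(\alpha,\beta)$, then $\gaux$ is constant on $[\alpha,\beta]$, whence $\gamma=\gaux\circ h$ is constant on $J:=h^{-1}([\alpha,\beta])$; then $\mathrm{int}\,J$ is a non-degenerate interval inside $\mathscr{S}(\gamma)$, so $\mathrm{int}\,J\subseteq E$ and thus $|J\cap F|=0$, contradicting $|J\cap F|=h(\max J)-h(\min J)=\beta-\alpha>0$. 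Together with $\tau<T$, this gives all the asserted properties of $\gaux$.
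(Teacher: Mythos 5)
Your proof is correct, and while it pursues the same underlying idea as the paper's\,---\,collapsing the flat intervals of $\gamma$\,---\,the implementation is genuinely different and in two respects more robust. The paper enumerates the positive-length components $\overline{I_j}=[a_j,b_j]$ of $\mathscr{S}(\gamma)$ in increasing order $0\leq a_1<b_1\leq a_2<b_2\leq\cdots<T$ and defines $\gaux$ by explicit translations on the complementary pieces, then asserts absolute continuity on the grounds that $\gaux$ is glued from restrictions of $\gamma$. Your construction via the $1$-Lipschitz collapsing map $h(t)=t-|E\cap[0,t]|$ is order-free, which matters: the components of $E$ need not admit a monotone enumeration (they can be densely ordered, e.g.\ when the flat intervals are the complementary intervals of a fat Cantor set, a configuration realisable by an absolutely continuous non-constant path in $\D$), so the paper's enumeration scheme requires justification exactly where your quotient construction applies verbatim. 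Moreover, a continuous gluing of countably many absolutely continuous restrictions is not automatically absolutely continuous, and your estimate $\sum_i|f_{\mathrm{aux}}(\beta_i)-f_{\mathrm{aux}}(\alpha_i)|\leq\int_W|f'|\,dr$ with $|W|=\sum_i(\beta_i-\alpha_i)$, resting on the absolute continuity of the Lebesgue integral, supplies precisely the uniform control that the paper leaves implicit; your contradiction argument for the empty-interior claim, via $|J\cap F|=\beta-\alpha>0$, is likewise more detailed than the paper's one-line appeal to the maximality of the $I_j$. What the paper's route buys is explicitness: a closed-form expression for $\gaux$ and for $\tau=T-\sum_j(b_j-a_j)$, which agrees with your $\tau=T-|E|$. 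One small repair to your write-up: the parenthetical claim that the level sets of $h$ are singletons or the intervals $\overline{I_n}$ is not accurate in general\,---\,a fiber of $h$ is a maximal closed interval $[s,t]$ with $|[s,t]\cap F|=0$, and it may contain several components of $E$ together with a measure-zero portion of $F$ (even a measure-zero Cantor subset)\,---\,but the well-definedness of $\gaux$ survives by the very argument you already invoke on $\overline{I_n}$: on any such fiber $\gamma'=0$ a.e., so absolute continuity forces $\gamma$ to be constant there.
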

\begin{proof}
Let us assume that $\mathscr{S}(\gamma)$ has non-empty interior.
We define an equivalence relation on $[0,T]$ as follows:
\[
  s\thicksim t \iff \left.\gamma\right|_{[\min\{s,t\},\,\max\{s,t\}]} \ \text{is constant}.
\]
Let $[0,T]/\!\thicksim$ denote the quotient space and $\pi: [0,T]\lrarw [0,T]/\!\thicksim$ be the quotient map.
By the continuity of $\gamma$, if, for a point $p\in [0,T]/\!\thicksim$, $\pi^{-1}\{p\}$ has more than one element,
then $\pi^{-1}\{p\}$ is a closed interval of positive length. Let
\[
  \mathscr{C}(\gamma) := \{\pi^{-1}\{p\}\varsubsetneq [0,T]: p\in [0,T]/\!\thicksim \text{ and } 
  							\pi^{-1}\{p\} \text{ has more than one element}\}.
\]
By our assumption on $\mathscr{S}(\gamma)$ and as $\gamma$ is absolutely continuous, 
$\mathscr{C}(\gamma)\neq \emptyset$.
\smallskip 
 
It is well-known\,---\,see \cite[Section~2.5]{buragoburagoivanov:cmg01}, for instance\,---\,that
there exists a constant $\tau>0$ (recall that $\gamma$ is non-constant), a homeomorphism
$h: ([0,T]/\!\thicksim) \lrarw [0,\tau]$ and a continuous path $\gaux: [0, \tau]\lrarw X$ such that the
diagram
\begin{center}
  \begin{tikzcd}[column sep=large, row sep=large]
  \left[0,T\right]	        \arrow[r, "\gamma"]
  			           \arrow[d, "h\circ \pi"']
  &X \\
  \left[0,\tau\right]	\arrow[ur, "\gaux"']
  \end{tikzcd}
\end{center}
commutes and such that:
\begin{enumerate}[leftmargin=25pt]
  \item For any interval $L\subseteq [0,\tau]$, $(h\circ \pi)^{-1}(L)$ is an interval.
  \item For any interval $L\subseteq [0,\tau]$ and any interval $I\in \mathscr{C}(\gamma)$
  such that $(h\circ \pi)^{-1}(L)\setminus I\neq \emptyset$, if
  $C(I,L)$ denotes the set of connected components of $(h\circ \pi)^{-1}(L)\setminus I$, then
  \begin{equation}\label{E:cut_length}
    {\rm length}(L) = \sum\nolimits_{S\in C(I,L)}{\rm length}(S).
  \end{equation}
\end{enumerate}
(By the property $(1)$, $(h\circ \pi)^{-1}(L)$ is an interval, whereby $C(I,L)$ has
at most two elements. Thus, the right-hand side of \eqref{E:cut_length} is a finite sum.) It
follows that $\langle\gaux \rangle = \langle\gamma \rangle$.  
\smallskip
       
We shall now prove that $\gaux$ is absolutely continuous. The idea behind the proof is, essentially, that
since $\gaux$ is constructed by gluing together restrictions of $\gamma$ on various subintervals of $[0,T]$,
$\gaux$ must be absolutely continuous. To elaborate on this, fix $t_0\in [0,T]$ and a holomorphic chart
$(U,\varphi)$ around $\gamma(t_0)$. In what follows, $J$ will denote the set $\{1, 2,\dots, N\}$ for
some $N\in \Z_+$ whose value would depend on the discussion at hand.
As $\gamma$ is absolutely continuous, if we fix
$\intv:= I(\varphi,t_0)$\,---\,a closed and bounded interval contained in $\gamma^{-1}(U)$\,---\,then
given $\eps>0$, there exists a number $\delta(\eps)>0$ such that for any finite collection
$\{(a_j,b_j): j\in J\}$ of pairwise disjoint open subintervals of $\intv$ such that 
$\sum_{j\in J}(b_j-a_j) < \delta(\eps)$, we have
\begin{align}
  \sum_{j\in J}|\re(\varphi_k\circ\gamma)(b_j) - \re(\varphi_k\circ\gamma)(a_j)| 
  &< \eps, \label{E:real-part_k}\\
  \sum_{j\in J}|\im(\varphi_k\circ\gamma)(b_j) - \im(\varphi_k\circ\gamma)(a_j)|
  &< \eps, \label{E:im-part_k}
\end{align}
for each $k=1,\dots, n$.
\smallskip

For any $I\in \mathscr{C}(\gamma)$, $I\cap \mathbb{Q}\neq \emptyset$. From this and the fact that if
$I_1\neq I_2\in \mathscr{C}(\gamma)$, then $I_1\cap I_2 = \emptyset$, it follows that $\mathscr{C}(\gamma)$
is at most countable. We shall now assume that $\mathscr{C}(\gamma)$ has countably
many intervals (the proof is, essentially, routine when $\mathscr{C}(\gamma)$ is finite).
Note that since $\intv$ was arbitrarily chosen subject to the conditions mentioned in the previous paragraph,
it suffices to show that
\begin{itemize}
  \item[$(*)$] $\varphi\circ\gaux|_{\intv^*}$ is an absolutely continuous path in $\R^{2n}$,
  where $\intv^* := h\circ \pi(\intv)$.
\end{itemize}
Fix $\eps>0$. With $\delta(\eps)$ as given above, let $\{(\alpha_j,\beta_j): j\in J\}$ be a
finite collection of pairwise disjoint open subintervals of $\intv^*$ such that 
$\sum_{j\in J}(\beta_j-\alpha_j) < \delta(\eps)/2$. Fix $j^*\in J$ and write $L:= (\alpha_{j^*}, \beta_{j^*})$.
Let $C(L)$ denote the set of elements (i.e., intervals) of $\mathscr{C}(\gamma)$ that intersect
$(A_{j^*}, B_{j^*}):= (h\circ \pi)^{-1}(L)$ (that $(h\circ \pi)^{-1}(L)$ is an interval is due to $(1)$
above). At this stage, the argument leading to the inequalities (\ref{E:telescope_re}, \ref{E:telescope_im})
below splits into several cases. We
shall consider the case where:
\begin{itemize}[leftmargin=30pt]
  \item $A_{j^*}$ is in none of the intervals in $C(L)$ while $B_{j^*}$ is contained in some
  interval in $C(L)$;
  \item $C(L)$ is countably infinite.
\end{itemize}
Other cases can be analysed similarly to deduce (the appropriate form of) the inequalities
(\ref{E:telescope_re}, \ref{E:telescope_im}) below. Now, let $\{I_{\nu}: \nu\in \Z_+\}$
be an enumeration of $C(L)$. Write
\[
 m_{\nu} := \inf(I_{\nu}\cap (A_{j^*}, B_{j^*})), \quad M_{\nu}:= \sup(I_{\nu}\cap (A_{j^*}, B_{j^*})).
\]
Next, define the numbers
\begin{align*}
  \wt{M}_{\nu} &:= \inf\{m_{\mu}: \mu\in \Z_+ \text{ and } m_{\mu} > M_{\nu}\}, \; \; \nu\in \Z_+, \\
  \wt{M}_0 &:= \inf\{m_{\nu}: \nu\in \Z_+\}.
\end{align*}
Then, owing to the definitions of $m_{\nu}, M_{\nu}$, $\nu=1, 2, 3,\dots$,
\[
  \{(A_{j^*}, \wt{M}_0)\}\cup \{(M_{\nu}, \wt{M}_{\nu}): \nu\in \Z_+\}
  \]
is a countable collection of pairwise disjoint open intervals, some of which \emph{could
be empty sets.}
As these are all contained in $(A_{j^*}, B_{j^*}) = (h\circ\pi)^{-1}(L)$, it
follows from \eqref{E:cut_length} by an elementary inductive argument
that
\[
  (\wt{M}_0-A_{j^*}) + \sum_{\nu=1}^\infty(\wt{M}_{\nu} - M_{\nu}) = {\rm length}(L).
\]
Due to this, and in view of the fact that (writing $M_0:=A_{j^*}$) 
$\inf\{(\wt{M}_{\nu} - M_{\nu}): \nu\in \N\} = 0$, we can find a number $N(j^*)\in \Z_+$ and points
\[
  A_{j^*}= c_1 < \wt{c}_1 < c_2 < \wt{c}_2 < \dots < c_{N(j^*)} < \wt{c}_{N(j^*)} < B_{j^*}\,,
\]
where $\wt{c}_{i}\in (\{\wt{M}_{\nu}: \nu\in \N\}\cap \{m_{\nu}: \nu\in \Z_+\})$ for each
$i=1,\dots, N(j^*)$, $\wt{c}_{N(j^*)}$ is such that
$[\,\wt{c}_{N(j^*)}, B_{j^*}]\subseteq I_{\nu}$ for some $\nu\in \Z_+$, 
and if $\wt{c_i} = m_{\nu_i}$, then $c_{i+1} = M_{\nu_i}$ for each
$i=1,\dots, N(j^*)-1$, such that
\[
  \sum_{i=1}^{N(j^*)}(\wt{c}_i-c_i) < \big(1+2^{-(j^*+1)}\big){\rm length}(L).
\]
From the above and the fact that
$\gamma|_{[\,\wt{c}_{i},\,c_{i+1}]}$ is constant, we easily conclude:
\begin{align}
  |&\re(\varphi_k\circ\gaux)(\beta_{j^*}) - \re(\varphi_k\circ\gaux)(\alpha_{j^*})| \notag \\
  &=|\re(\varphi_k\circ\gamma)(B_{j^*}) - \re(\varphi_k\circ\gamma)(A_{j^*})|\,
  \leq \sum_{i=1}^{N(j^*)}|\re(\varphi_k\circ\gamma)(\wt{c}_{i}) - \re(\varphi_k\circ\gamma)(c_{i})|, 
  \label{E:telescope_re}\\
  |&\im(\varphi_k\circ\gaux)(\beta_{j^*}) - \im(\varphi_k\circ\gaux)(\alpha_{j^*})| \notag \\
  &=|\im(\varphi_k\circ\gamma)(B_{j^*}) - \im(\varphi_k\circ\gamma)(A_{j^*})|\,
  \leq\sum_{i=1}^{N(j^*)}|\im(\varphi_k\circ\gamma)(\wt{c}_{i}) - \im(\varphi_k\circ\gamma)(c_{i})|,
  \label{E:telescope_im}
\end{align}    
for each $k=1,\dots, n$. In the above argument, $j^*\in J$
was arbitrarily chosen. Thus, for each $L\in \{(\alpha_j,\beta_j): j\in J\}$ (equivalently,
$j\in J$) we obtain a finite collection, say $\mathscr{C}(j)$,
of pairwise disjoint open subintervals of $\intv$
\[
  \text{the sum of whose lengths} < \big(1+2^{-(j+1)}\big){\rm length}(L) < 2\bcdot{\rm length}(L)
\]
and associated to which we have estimates analogous to the inequalities
(\ref{E:telescope_re}, \ref{E:telescope_im}). As $\bigcup_{j\in J}\mathscr{C}(j)$ is a finite collection
of pairwise disjoint open subintervals of $\intv$ the sum of whose lengths is less than $\sum_{j\in J}2(\beta_j-\alpha_j)
< \delta(\eps)$,
it follows from the last two assertions and (\ref{E:real-part_k}, \ref{E:im-part_k}) that
\begin{align*}
  \sum_{j\in J}|\re(\varphi_k\circ\gaux)(\beta_j) - \re(\varphi_k\circ\gaux)(\alpha_j)| 
  &< \eps, \\
  \sum_{j\in J}|\im(\varphi_k\circ\gaux)(\beta_j) - \im(\varphi_k\circ\gaux)(\alpha_j)|
  &< \eps, \label{E:im-part_k}
\end{align*}
for each $k=1,\dots, n$. This establishes absolute continuity of $\gaux$ (recall Definition~\ref{D:abs-cont}).
\smallskip

Finally, it follows from the commutative diagram above that $\gaux$ cannot be constant on
any subinterval of $[0,\tau]$ of positive length. Thus, as we have shown that $\gaux$ is absolutely
continuous, the set $\{t\in [0,\tau] : (\gaux)'(t) = 0\}$ must have empty interior.
\end{proof}

We now present our second key result.

\begin{proposition}\label{P:unit_speed}
Let $X$ be a Kobayashi hyperbolic complex manifold.
Let $\Gamma: [0,\tau]\lrarw X$ be an absolutely continuous path 
and define
\[
  \mathscr{S}(\Gamma) := \{t\in [0,\tau] : \Gamma'(t) = 0\}.
\]
Assume that $\mathscr{S}(\Gamma)$ has empty interior. Write
\begin{equation}\label{E:Gamma_length}
  G(t) := \int_{0}^t k_{X}(\Gamma(s); \Gamma'(s))\,ds \quad \forall t\in [0, \tau].
\end{equation}
Then:
\begin{itemize}[leftmargin=25pt]
  \item[$(a)$] $G$ is strictly increasing.
  \item[$(b)$] Writing $h := G^{-1}$, $\sigma := \Gamma\circ h$, and
  $\ell := G(\tau)$, $\ell < \infty$, $\sigma'(t)$ exists for a.e.
  $t\in [0, \ell]$, $[0, \ell]\ni t\mapsto (\sigma(t), \sigma'(t))$ determines
  a $T^{(1,0)}X$-valued Borel-measurable map, and we have:
  \begin{equation}\label{E:key_eqn}
    t = \int_{0}^t k_{X}(\sigma(u); \sigma'(u))\,du \quad \forall t\in [0, \ell].
  \end{equation}
  \item[$(c)$] The path $\sigma$ satisfies $k_{X}(\sigma(t); \sigma'(t)) = 1$ for
  a.e. $t\in [0, \ell]$ and $\sigma$ is absolutely continuous
\end{itemize}
\end{proposition}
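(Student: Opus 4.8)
The plan is to take the three parts in order, with the real difficulty concentrated in $(b)$ and $(c)$: the inverse $h:=G^{-1}$ need not be absolutely continuous, because $G'$ can vanish on a set of positive measure with empty interior (e.g.\ a fat Cantor set), which is precisely the subtlety flagged in Section~\ref{S:intro}. Throughout I fix a Hermitian metric $h_{\bullet}(\bullet)$ on $X$ with induced distance $d_h$ and write $\rho(s):=k_X(\Gamma(s);\Gamma'(s))$ (defined for a.e.\ $s$). Two local comparisons are used repeatedly: the lower bound $k_X(y;v)\geq c_x\,h_y(v)$ near each point, from Result~\ref{R:kob-hyp-cond}, and the matching upper bound $k_X(y;v)\leq C_x\,h_y(v)$ obtained by comparing with the Kobayashi metric of a coordinate ball via the distance-decreasing property. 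The upper bound shows $\rho\in\leb{1}([0,\tau])$ (since $\Gamma$ is absolutely continuous), so $G$ is absolutely continuous with $G'=\rho$ a.e.

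For part $(a)$, I would note $\rho\geq 0$, so $G$ is increasing. If $G$ were not strictly increasing there would be $s_0<t_0$ with $\int_{s_0}^{t_0}\rho=0$, forcing $\rho=0$ a.e.\ on $[s_0,t_0]$; the lower bound of Result~\ref{R:kob-hyp-cond} then gives $h_{\Gamma(s)}(\Gamma'(s))=0$, i.e.\ $\Gamma'(s)=0$, for a.e.\ $s\in[s_0,t_0]$. Covering the compact set $\Gamma([s_0,t_0])$ by finitely many charts and using absolute continuity of $\Gamma$, this makes $\Gamma$ constant on $[s_0,t_0]$, whence $(s_0,t_0)\subseteq\mathscr{S}(\Gamma)$, contradicting the empty-interior hypothesis. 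Thus $G$ is a continuous, strictly increasing bijection onto $[0,\ell]$, with continuous strictly increasing inverse $h:[0,\ell]\lrarw[0,\tau]$.

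Before the arc-length identity, the key point I would establish is that $\sigma:=\Gamma\circ h$ is absolutely continuous, even though $h$ may not be. For $c<d$ in $[0,\ell]$ with $\sigma([c,d])$ inside one chart, $d_h(\sigma(c),\sigma(d))$ is at most the $d_h$-length of the connecting path $\Gamma|_{[h(c),h(d)]}$, which by the lower bound of Result~\ref{R:kob-hyp-cond} is at most $c_x^{-1}\int_{h(c)}^{h(d)}\rho = c_x^{-1}\big(G(h(d))-G(h(c))\big)=c_x^{-1}(d-c)$. Hence $\sigma$ is locally Lipschitz for $d_h$, therefore absolutely continuous in the sense of Definition~\ref{D:abs-cont}; consequently $\sigma'$ exists a.e.\ and $t\mapsto(\sigma(t),\sigma'(t))$ is Borel measurable exactly as in the remark following Result~\ref{R:deriv}. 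This is where Royden's lower bound does its essential work: $\Gamma$ ``does not move'' on the bad set, so the possibly-singular behaviour of $h$ is absorbed.

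For \eqref{E:key_eqn} I would resist changing variables by $h$ and instead apply Result~\ref{R:change_of_var} with the absolutely continuous substitution $\varphi=G\colon[0,h(t)]\lrarw[0,t]$ and $f=k_X(\sigma(\bcdot);\sigma'(\bcdot))$, giving
\[
  \int_0^t k_X(\sigma(u);\sigma'(u))\,du=\int_0^{h(t)} k_X\big(\sigma(G(x));\sigma'(G(x))\big)\,G'(x)\,dx ,
\]
where $\sigma(G(x))=\Gamma(x)$ identically. Let $Z\subseteq[0,\ell]$ be the null set where $\sigma'$ fails to exist; since $G$ has Luzin's property (N) by Result~\ref{R:zero_meas}, the image-measure identity $|G(A)|=\int_A G'$ (for measurable $A$) forces $G'(x)=0$ for a.e.\ $x$ with $G(x)\in Z$. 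Thus, for a.e.\ $x$: if $G'(x)>0$ then $\sigma$ is differentiable at $G(x)$ and the chain rule for $\Gamma=\sigma\circ G$ gives $\Gamma'(x)=G'(x)\,\sigma'(G(x))$, so by positive homogeneity of $k_X$ the integrand equals $k_X(\Gamma(x);\Gamma'(x))=\rho(x)$; while if $G'(x)=0$ the integrand vanishes and $\rho(x)=0$ too. Integrating yields $\int_0^t k_X(\sigma;\sigma')=\int_0^{h(t)}\rho=G(h(t))=t$, which is \eqref{E:key_eqn}; Lebesgue differentiation then gives $k_X(\sigma(t);\sigma'(t))=1$ a.e., completing $(b)$ and $(c)$. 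I expect the main obstacle to be exactly this interface between the non-absolute-continuity of $h$ and the change of variables: one must route everything through the absolutely continuous map $G$ and control the $G$-preimage of $Z$ via $|G(A)|=\int_A G'$, rather than attempt a substitution by $h$.
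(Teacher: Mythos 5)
Your proof is correct, and its main skeleton matches the paper's: strict monotonicity of $G$ from Kobayashi hyperbolicity plus the empty-interior hypothesis, integrability of $k_X(\Gamma(\bcdot);\Gamma'(\bcdot))$ so that $G$ is absolutely continuous, the change of variables routed through the absolutely continuous map $G$ (never through $h=G^{-1}$) via Result~\ref{R:change_of_var}, and Lebesgue differentiation of \eqref{E:key_eqn} to get unit speed. You diverge from the paper in how part~$(b)$'s differentiability and measurability are organised, and the difference is substantive. The paper first shows $\sigma'$ exists a.e.\ by writing $\sigma'=h'\cdot(\Gamma'\circ h)$ and decomposing the bad set as $G(A_1)\cup B_1$ (with $|G(A_1)|=0$ by Result~\ref{R:zero_meas} and $|B_1|=0$, plus Borel measurability, by Result~\ref{R:deriv} applied to the monotone function $h$), and only at the very end of $(c)$ proves $\sigma$ absolutely continuous, via a Lipschitz estimate that uses the already-established identity $k_X(\sigma(u);\sigma'(u))=1$ a.e. You invert this order: you prove absolute continuity of $\sigma$ \emph{first}, bounding $d_h(\sigma(c),\sigma(d))$ by the $d_h$-length of $\Gamma|_{[h(c),h(d)]}$, which Royden's lower bound controls by $c^{-1}\big(G(h(d))-G(h(c))\big)=c^{-1}(d-c)$; since this transports everything through the absolutely continuous path $\Gamma$, it needs no prior information about $\sigma'$, and a.e.\ differentiability and measurability then come for free from Definition~\ref{D:abs-cont}. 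This buys a genuinely self-contained justification of the Lipschitz step (the paper's inequality $\|\varphi\circ\sigma(s)-\varphi\circ\sigma(t)\|\leq\int_s^t\|(\varphi\circ\sigma)'(u)\|\,du$ is the kind of estimate that, for a merely a.e.-differentiable continuous map, requires justification via $\sigma=\Gamma\circ h$, which your ordering renders unnecessary), at the cost of not exhibiting the clean structural fact, recorded in Remark~\ref{rem:subtlety}, that Borel measurability can be had from Result~\ref{R:deriv} alone even before absolute continuity is known. Your second deviation is also an improvement in explicitness: you isolate the interface between the null set $Z$ where $\sigma'$ fails and the substitution by noting $G'=0$ a.e.\ on $G^{-1}(Z)$ (for which only the inequality $\int_A G'\leq|G(A)|$, valid for any increasing function, is actually needed, so your appeal to the full image-measure identity is fine), whereas the paper leaves this absorbed in its terse citation of the chain rule at \eqref{E:chain_rule}. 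Finally, your local two-sided comparison of $k_X$ with a Hermitian metric to get $k_X(\Gamma(\bcdot);\Gamma'(\bcdot))\in\leb{1}([0,\tau])$ replaces the paper's citation of Venturini \cite{venturini:pprcm89}; both are standard and correct (though you might note, as a point of hygiene, that you use $h$ simultaneously for the Hermitian metric and for $G^{-1}$\,---\,a clash the paper itself commits).
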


\begin{remark}
Even though it is not \emph{a priori} known whether $\sigma$ is absolutely continuous, the right-hand side of
\eqref{E:key_eqn} makes sense in view of the statement preceding it and because $k_{X}$ is upper semi-continuous
(owing to which $k_{X}(\sigma(\bcdot); \sigma'(\bcdot))$ is a non-negative measurable function).
\end{remark}

\begin{proof}
Since $\Gamma$ is absolutely continuous, by \cite[Corollary~4.1]{venturini:pprcm89}
$k_{X}(\Gamma(\bcdot); \Gamma'(\bcdot))$ is of class $\leb{1}([0,\tau])$. Thus, $\ell < \infty$. 
Given a function $F: [0,\tau]\lrarw \R$, one of the characterizations of the absolute continuity
of $F$ is the existence of a function $f$ of class $\leb{1}([0,\tau])$ such that
$F(t) = F(0) + \int_{0}^{t}f(s)\,ds$ for all $t\in [0,\tau]$. Therefore, as
$k_{X}(\Gamma(\bcdot); \Gamma'(\bcdot))$ is of class $\leb{1}([0,\tau])$, $G$ is absolutely continuous.
\smallskip

By Lebesgue's differentiation theorem, we get $G'(t) = k_{X}(\Gamma(t); \Gamma'(t))$ for a.e. $t\in [0,\tau]$. So, as 
$k_{X}$ is upper semi-continuous and $X$ is Kobayashi hyperbolic, if $G$ were constant on an interval
$I\subseteq [0,\tau]$ of positive length, then $\Gamma'|_{I}\equiv 0$. Thus, $(a)$ now follows since $G$ is monotone
increasing and, by hypothesis, there are no intervals positive length in $\mathscr{S}(\Gamma)$.
\smallskip

Note that
\[
\sigma'(t) = h'(t)\Gamma'(h(t)) \quad \forall t\in [0, \ell] \text{ at which the right-hand side exists.}
\]
Let us write:
\[
  A_1 := \{t\in [0, \tau] : \Gamma'(t) \text{ does not exist}\} \ \ \text{and}
  \ \ B_1 := \{t\in [0, \ell] : h'(t) \text{ does not exist}\}.
\]
Then,
\begin{equation}\label{E:bad_set}
 \{t\in [0, \ell] : \sigma'(t) \text{ does not exist}\} = h^{-1}(A_1)\cup B_1 = G(A_1)\cup B_1.
\end{equation}
As $G$ is absolutely continuous, by Result~\ref{R:zero_meas}, $|G(A_1)| = 0$. By Result~\ref{R:deriv}, $|B_1| = 0$.
Thus, in view of \eqref{E:bad_set}, $\sigma'(t)$ exists for a.e. $t\in [0, \ell]$. Thus,
\[
  (\sigma(t), \sigma'(t)) = \big(\Gamma\circ h(t), h'(t)\Gamma'(h(t))\big) \quad \text{for a.e.} \ t\in [0, \ell].
\]
Since $\Gamma$ is absolutely continuous, the measurability claimed for $t\mapsto (\sigma(t), \sigma'(t))$
follows from the above identity and 
Result~\ref{R:deriv}. Note that if $h$ were absolutely continuous, then this appeal to
Result~\ref{R:deriv} would not have been needed. However, $h$ need not be absolutely continuous
(see Remark~\ref{rem:subtlety}). As $k_{X}$ is upper semi-continuous, 
$k_{X}(\sigma(\bcdot); \sigma'(\bcdot))$ is a non-negative Borel measurable\,---\,and, hence, Lebesgue
measurable\,---\,function. Hence, the integrals on the left-hand side of \eqref{E:key-eqn} below exist (even though
they may \emph{a priori} equal $+\infty$). As $G$ is absolutely continuous,
by Result~\ref{R:change_of_var}, for every $t\in [0,\ell]$ we have
\begin{align}
  \int_{0}^t k_{X}(\sigma(u); \sigma'(u))\,du
  &= \int_{0}^{G^{-1}(t)}k_{X}\big(\sigma(G(s)); \sigma'(G(s))\big)G'(s)\,ds \label{E:key-eqn}\\
  &= \int_{0}^{G^{-1}(t)}k_{X}\big(\sigma(G(s)); \sigma'(G(s))G'(s)\big)\,ds\notag \\
  &= \int_{0}^{G^{-1}(t)}k_{X}\big(\Gamma(s); \Gamma'(s)\big)\,ds\label{E:chain_rule}\\
  &= G(G^{-1}(t))=t. \notag
\end{align}
This establishes \eqref{E:key_eqn}. Note that the equality in \eqref{E:chain_rule} follows from the relation
$\Gamma = \sigma\circ G$ and from the chain rule applied to this composition.
\smallskip

Now, applying Lebesgue's
differentiation theorem to \eqref{E:key_eqn}, we have $k_{X}(\sigma(t); \sigma'(t)) = 1$ for a.e.
$t\in [0, \ell]$. It remains to show that $\sigma$ is absolutely continuous. This is immediate when $h$ is absolutely
continuous. As $h$ need not, in general, be absolutely continuous (as alluded to above), a proof is
needed. To this end, let us fix a Hermitian metric $\hrm$ on $X$. It follows from 
Result~\ref{R:kob-hyp-cond}, since $\sigma([0, \ell])$ is compact, that there exists a constant
$c > 0$ such that
\[
  k_X(\sigma(u); \sigma'(u))\geq c\hrm_{\sigma(u)}(\sigma'(u)) \quad \text{for a.e.} \ u\in [0, \ell].
\]
Fix $t_0\in [0, \ell]$ and a holomorphic chart $(U, \varphi)$ around $\sigma(t_0)$. Fix a compact
interval $I(\varphi, t_0) =: I$ such that $\varphi\circ\sigma(I)\subset U$. As $\sigma(I)$ is
compact, there exists a constant $C > 1$ such that
\[
  C^{-1}\|(\varphi\circ\sigma)'(u)\| \leq \hrm_{\sigma(u)}(\sigma'(u)) \leq 
  C\|(\varphi\circ\sigma)'(u)\| 
  \quad \text{for a.e.} \ u\in I.
\]
Then, from the last two estimates:
\begin{align*}
  \|\varphi\circ\sigma(s) - \varphi\circ\sigma(t)\|
  &\leq \int_{s}^t\|(\varphi\circ\sigma)'(u)\|\,du \\
  &\leq C\!\int_{s}^t\hrm_{\sigma(u)}(\sigma'(u))\,du
  \leq \left(\frac{C}{c}\right)\!\int_{s}^tk_{X}(\sigma(u); \sigma'(u))\,du
  = \left(\frac{C}{c}\right)|s-t| 
\end{align*}
for every $s, t\in I$ such that $s<t$. In the above estimate, the last equality is due to the fact that
$k_{X}(\sigma(u); \sigma'(u)) = 1$ for a.e. $u\in I$. This shows that 
$\varphi\circ\sigma|_{I(\varphi, t_0)}$ is Lipschitz. As $t_0$ and $(U, \varphi)$ were arbitrarily chosen,
$\sigma$ is absolutely continuous. Hence, the proof of $(c)$ is complete. 
\end{proof}
\smallskip

\section{The proof of Theorem~\ref{T:unit-speed} and a related remark}\label{S:unit-speed}
We begin with

\begin{proof}[The proof of Theorem~\ref{T:unit-speed}]
Let $\tau := T$ if $\mathscr{S}(\gamma)$ contains no intervals of positive length, else let $\tau$ be as given by
Proposition~\ref{P:aux}. Next, define
\begin{equation}\label{E:Gamma}
  \Gamma(t) := \begin{cases}
                 \gamma(t),
                 &\text{if $\mathscr{S}(\gamma)$ contains no intervals of positive length,} \\
                 \gaux(t),
                 &\text{if $\mathscr{S}(\gamma)$ contains intervals of positive length,} 
            \end{cases}
\end{equation}
for $t\in [0,\tau]$, where $\gaux$ is as given by Proposition~\ref{P:aux}. We have
$\langle\gamma\rangle = \langle\Gamma\rangle$. Furthermore, by Proposition~\ref{P:aux},
$\mathscr{S}(\Gamma) := \{t\in [0,\tau] : \Gamma'(t) = 0\}$ contains no intervals of positive length.
Thus, if we write $\sigg := \Gamma\circ(G^{-1})$, where $G$ is as given by \eqref{E:Gamma_length}, then the desired properties of $\sigg$ follow from Proposition~\ref{P:unit_speed}.
\smallskip

That $\gamma$ can be reparametrized with respect to its $k_{X}$-length if $\mathscr{S}(\gamma)$
contains no intervals of positive length follows from Proposition~\ref{P:unit_speed}. The converse is
due to the fact that the function
\begin{equation}\label{E:g}
  g(t) := \int_{0}^{t} k_X(\gamma(s);\gamma'(s))\,ds \quad \forall t\in[0,T],
\end{equation}
is not invertible when $\mathscr{S}(\gamma)$ contains intervals of positive length. This establishes~$(a)$.
If $\mathscr{S}(\gamma)$ contains no intervals of positive length, then the description
of $\Gamma$ given by \eqref{E:Gamma} and the definition $\sigg := \Gamma\circ(G^{-1})$
establishes~$(b)$.
\end{proof}

\begin{remark}\label{rem:subtlety}
We can now highlight the subtleties, hinted at in Section~\ref{S:intro}, encountered
in answering the questions in Problem~\ref{pro:questions}.
These subtleties are of a fundamental nature. E.g., note: even when the function $g$ given
by \eqref{E:g} is invertible, $g^{-1}$ is not always absolutely continuous. Of course, we
need to deal with the version of this problem that arises for general $\gamma$. 
Let $\Gamma$ be as given by \eqref{E:Gamma}, $G$ be as given by \eqref{E:Gamma_length}, and
write $h = G^{-1}$. It is well-known that $h$ is absolutely continuous if and only if
$|\{t\in [0,\tau] : G'(t) = 0\}| = 0$. The latter condition fails if
$|\mathscr{S}(\Gamma)| >0$. When $h$ is not
absolutely continuous, we are faced with two difficulties:
\begin{itemize}[leftmargin=30pt]
  \item Firstly, we must determine whether $(\Gamma\circ h)'(t)$ exists for a.e. $t$ in the domain of $h$.
  \item Assuming that the above point is settled in the affirmative, the question still remains whether
  $k_{X}(\Gamma\circ h(\bcdot); (\Gamma\circ h)'(\bcdot))$ is Lebesgue measurable.
\end{itemize}
Analysing $k_{X}(\Gamma\circ h(\bcdot); (\Gamma\circ h)'(\bcdot))$ 
is vital to showing that $\sigg := \Gamma\circ h$ is a unit-speed path with respect to $k_{X}$. The
second question is fundamental to defining the integrals in \eqref{E:key-eqn}. It arises (when $h$
is not absolutely continuous) because a Borel measurable function pre-composed with a Lebesgue, but
not Borel, measurable map is not necessarily Borel measurable. This question
is resolved by Result~\ref{R:deriv} because $h$ is monotone increasing: due to this, and as
$k_{X}$ is upper semi-continuous, $k_{X}(\Gamma\circ h(\bcdot); (\Gamma\circ h)'(\bcdot))$ is Borel measurable.
\end{remark}
\smallskip

\section{Applications}\label{S:appl}
This section is dedicated to some applications of the Reparametrization Lemma.
Each of the theorems in this section\,---\,Theorems~\ref{T:C-A_A-G}, \ref{T:equiv},
and~\ref{T:loc_weak-geod}\,---\,will
be introduced by a brief discussion on the broader
questions in the Kobayashi geometry of \emph{non-compact} complex manifolds to which they relate.
The proofs of Theorems~\ref{T:C-A_A-G} and~\ref{T:equiv} will be presented in
Section~\ref{S:appl_proofs}.
\smallskip

\subsection{The relationship between almost-geodesics and chord-arc curves}
Our first application of the Reparametrization Lemma states, loosely speaking, that if $X$ is a Kobayashi
hyperbolic manifold, then, given any $(\lambda,\kappa)$-almost-geodesic $\gamma: [0,T]\lrarw X$, where
$\lambda\geq 1$ and $\kappa\geq 0$, there is a path $\sigma_{\gamma}$ having the same image as $\gamma$ 
and a pair $(\lambda', \kappa')$ that depends on $(\lambda, \kappa)$ such that $\sigma_{\gamma}$
is a $(\lambda', \kappa')$-chord-arc curve, and vice versa. A tangible instance of the need for such a result
was mentioned in Section~\ref{S:intro}. 

\begin{theorem}\label{T:C-A_A-G}
Let $X$ be a Kobayashi hyperbolic complex manifold.
\begin{itemize}[leftmargin=25pt]
  \item[$(a)$] Let $\gamma: [0,T]\lrarw X$ be a $(\lambda,\kappa)$-chord-arc curve for some $\lambda\geq 1$ and
  $\kappa\geq 0$. Then, there exists a path $\sigg$ in $X$ with $\langle\gamma\rangle = \langle\sigg\rangle$
  such that $\sigg$ is a $(\lambda, \kappa)$-almost-geodesic.
  \item[$(b)$] Let $\gamma: [0,T]\lrarw X$ be a $(\lambda,\kappa)$-almost-geodesic for
  some $\lambda\geq 1$ and
  $\kappa\geq 0$. Then, $\gamma$ is a $(\lambda^2,\lambda^2\kappa)$-chord-arc curve.
\end{itemize} 
\end{theorem}

\begin{remark}
Since the paths considered in Theorem~\ref{T:C-A_A-G} belong to special classes of paths that are
geometrically defined, the question arises whether one needs to appeal to the rather hard
Theorem~\ref{T:unit-speed}, which pertains to \emph{arbitrary} absolutely continuous paths, to
prove Theorem~\ref{T:C-A_A-G}. When, in part~$(a)$, $\kappa>0$,
it is not clear that $\mathscr{S}(\gamma)$ (in the notation of Theorem~\ref{T:unit-speed}) does
\textbf{not} contain intervals of positive length. In any event, there seems to be no easy way to
establish that the difficulties highlighted in Remark~\ref{rem:subtlety} cannot occur in part~$(a)$.
\end{remark}

Recalling the discussion in Section~\ref{S:intro} about the article
\cite{nikolovoktenthomas:lgnvwrKdc24}, we should mention an approximation strategy provided by
\cite[Proposition~2.7]{chandelgoraimaitrasarkar:vp1sva24} for associating a compact $(\lambda,\kappa)$-chord-arc
curve $\gamma$ with some almost-geodesic\,---\,not necessarily having the same image as $\gamma$\,---\,that would
serve the needs of \cite{nikolovoktenthomas:lgnvwrKdc24}. But it seems more natural,
given a $(\lambda,\kappa)$-chord-arc curve $\gamma: [0,T]\lrarw X$, to avoid approximations and be able to
produce a path $\sigma_{\gamma}$ with the same image as $\gamma$ such that $\sigma_{\gamma}$ is
a $(\lambda,\kappa)$-almost-geodesic. This appears to be the intuition of the authors of
\cite{nikolovoktenthomas:lgnvwrKdc24}. Part~$(a)$ of Theorem~\ref{T:C-A_A-G} validates
this intuition (and also extends it to a much more general setting). Theorem~\ref{T:C-A_A-G} 
appeared as Corollary~1.3 in a preliminary, unpublished, version \cite{bharalimasanta:nsphuswrtKm24} of this
work .
\smallskip

\subsection{Various notions of visibility}
We begin this section with a definition of the visibility property alluded to in Section~\ref{S:intro}.
Given a Kobayashi hyperbolic complex manifold $X$, if $(X, K_X)$ is Cauchy-complete, then we will say
that $X$ is \emph{Kobayashi complete}.

\begin{definition}\label{D:visible}
Let $X$ be a complex manifold and $\Omega\varsubsetneq X$ be a Kobayashi hyperbolic domain.
\begin{itemize}[leftmargin=25pt]
 \item[$(a)$] Let $p,q\in\bdy\Omega$, $p\neq q$. We say that the pair $(p,q)$ satisfies the
 \emph{weak visibility condition}
 if there exist neighbourhoods $U_p$ of $p$ and
 $U_q$ of $q$ in $X$ such that
 $\overline{U_p}\cap\overline{U_q}=\emptyset$ and such that for each $\kappa\geq 0$, there
 exists a compact set $K\subset \Omega$ such that the image of each $(1,\kappa)$-almost-geodesic
 $\sigma:[0,T]\lrarw\Omega$ with $\sigma(0)\in U_p$ and $\sigma(T)\in U_q$ intersects $K$.
 \item[$(b)$] We say that $\bdy\Omega$ is \emph{weakly visible} if every pair of points $p,q\in\bdy\Omega$,
 $p\neq q$, satisfies the weak visibility condition. 
\end{itemize}
\end{definition}

We refer the reader to Section~\ref{S:intro} as to why, given two distinct points $p$ and $q$ in a
Kobayashi hyperbolic complex manifold and a number $\kappa>0$, there exists a
$(1,\kappa)$-almost-geodesic joining $p$ and $q$. In a Kobayashi complete complex manifold, $p$ and $q$
can be joined by a geodesic as well as by a length-minimizing geodesic, thanks to
Propositions~\ref{P:K_H-R} and~\ref{P:l_minim}, respectively. Thus, with $X$, $\Omega$, and 
$p,q\in \bdy\Omega$ as in Definition~\ref{D:visible}, if $\Omega$ is Kobayashi complete, then:
\begin{itemize}[leftmargin=30pt]
  \item we say that the pair $(p,q)$ satisfies the \emph{geodesic visibility condition} and
  $\bdy\Omega$ is \emph{geodesically visible} to signify conditions analogous to those in
  Definition~\ref{D:visible} but featuring geodesics. \smallskip
  \item we say that the pair $(p,q)$ satisfies the \emph{hyperbolic visibility condition} and
  $\bdy\Omega$ is \emph{hyperbolically visible} to signify conditions analogous to those in
  Definition~\ref{D:visible} but featuring length-minimizing geodesics.
\end{itemize}

The notion of visibility in the sense of Definition~\ref{D:visible} has been of considerable interest lately.
Weak visibility of $\bdy\Omega$ enables one to control the behaviour of holomorphic maps from various classes
of domains into $\Omega$, leading to a host of results on the properties of such maps: see, for
instance, \cite{bharalizimmer:gdwnv17, bharalimaitra:awnovfoewt21, braccinikolovthomas:vkgcdrp22,
bharalizimmer:gdwnv23, nikolovoktenthomas:lgnvwrKdc24, masanta:vdekdcm24}. An example of such a result is
the extension of the classical Wolff--Denjoy theorem to progressively larger classes of domains far
\emph{beyond} bounded convex domains in $\Cn$, $n\geq 2$ (which, until recently, had presented the widest scope
for generalizing the Wolff--Denjoy theorem); see \cite{bharalizimmer:gdwnv17, bharalizimmer:gdwnv23,
vergamini:ttwpvc23, masanta:vdekdcm24}. Being similar to weak visibility, the notion of geodesic
visibility has its share of applications involving Kobayashi complete domains; see
\cite{braccinikolovthomas:vkgcdrp22, braccigaussiernikolovthomas:lgvkd23, chandelmaitrasarkar:nvwrkdca21}.
\smallskip

Length-minimizing geodesics are sometimes very useful to work with in the context of the applications alluded
to above because of the intuitions from Riemannian or Hermitian geometry that accompany them. The work
\cite{braccibenini:DWtscd24} by Bracci--Benini, for instance, makes delicate use of length-minimizing
geodesics. It also features
the use of the visibility property where, in the relevant claims, it refers to results on \emph{geodesic}
visibility. Now, it is fairly easy to see that on simply-connected planar hyperbolic domains, which are
the focus of \cite{braccibenini:DWtscd24}, geodesic visibility and the notion of visibility
involving length-minimizing geodesics (i.e., hyperbolic visibility) coincide. It would be useful
to extend, if possible, this agreement between two notions of visibility to any Kobayashi complete domain in
a general complex manifold. With this in mind, we show:

\begin{theorem}\label{T:equiv}
Let $X$ be a complex manifold and let $\Omega\varsubsetneq X$ be a Kobayashi complete domain.
Then:
 \[
   \bdy\Omega \text{ is weakly visible}\iff
   \bdy\Omega \text{ is geodesically visible }\iff 
   \bdy\Omega \text{ is hyperbolically visible.}
 \]
\end{theorem}

\subsection{Localization of weak visibility}
Theorem~\ref{T:equiv} is summarized as stating that when $X$ is a complex manifold and
$\Omega\varsubsetneq X$ is a Kobayashi complete domain, various notions of visibility of
$\bdy\Omega$ encountered in the recent literature coincide. But as weak visibility of $\bdy\Omega$ also
makes sense when $\Omega$ is not Kobayashi complete, we single out
weak visibility in exploring the natural question: \emph{under what conditions does $\bdy\Omega$ being
locally weakly visible imply that $\bdy\Omega$ is weakly visible?} This question calls for
a definition. In this definition, the words ``with respect to $U_p\cap\Omega$'' signify that
the property stated is as described by Definition~\ref{D:visible} with $U_p\cap\Omega$
replacing $\Omega$.

\begin{definition}
Let $X$ be a complex manifold and let $\Omega\varsubsetneq X$ be a Kobayashi hyperbolic domain.
\begin{itemize}[leftmargin=25pt]
  \item[$(a)$] Let $p\in\bdy\Omega$. We say that 
  $\bdy\Omega$ is \emph{locally weakly visible at $p$} if there exists a neighbourhood $U_p$ of $p$ in
  $X$ such that $U_p\cap\Omega$ is connected and is a Kobayashi hyperbolic domain, and there exists a
  neighbourhood $V_p$ of $p$ in $X$, $V_p\subseteq U_p$, such that every pair $(q_1, q_2)$ of distinct points in $V_p\cap\bdy\Omega$ satisfies the weak visibility condition with respect to
  $U_p\cap\Omega$.
 \item[$(b)$] We say that $\bdy\Omega$
 is \emph{locally weakly visible} if $\bdy\Omega$ is locally weakly visible at each point $p\in\bdy\Omega$.
\end{itemize}     
\end{definition}

We shall not discuss in detail the status of the question raised above for reasons that will become
clear. Instead, we present the following theorem. While it is of independent interest, its relevance to
the above question is evident.

\begin{theorem}\label{T:loc_weak-geod}
Let $X$ be a complex manifold and $\Omega\varsubsetneq X$ be a Kobayashi hyperbolic domain.
Fix an open subset $U$ of $X$ such that $U\cap \Omega$ is a non-empty connected set. Let
$V\Subset U$ be an open set with $V\cap \Omega\neq \emptyset$ such that
\begin{itemize}[leftmargin=30pt]
  \item $K_{\Omega}(V\cap \Omega, \Omega\setminus U) > 0$;
  \item every pair $(q_1, q_2)$ of distinct points in $V\cap\bdy\Omega$ satisfies the weak visibility condition with respect to $U\cap\Omega$.
\end{itemize}
For any open set $W\Subset V$ with $W\cap \Omega\neq \emptyset$ and any $\kappa\geq 0$, there
exists a $\kappa_0>0$ depending only on $U, V, W$, and $\kappa$ such that for any
$(1,\kappa)$-almost-geodesic $\gamma$ relative to $K_{\Omega}$ with 
$\langle\gamma \rangle\subset W\cap \Omega$, there exists a path $\sigma_{\gamma}$ satisfying
$\langle\gamma \rangle = \langle\sigma_{\gamma} \rangle$ that is a $(1,\kappa_0)$-almost-geodesic
relative to $K_{U\cap \Omega}$.
\end{theorem}

We shall merely indicate the proof of Theorem~\ref{T:loc_weak-geod}, highlighting its reliance on
Theorem~\ref{T:unit-speed}. This is because Theorem~\ref{T:loc_weak-geod} has implicitly been
established in \cite{masanta:vdekdcm24}, but with a key ingredient of its proof\,---\,the Reparametrization
Lemma\,---\,cited to an unpublished manuscript; i.e., to a preliminary version of this 
work \cite{bharalimasanta:nsphuswrtKm24}. Here is a brief outline of the proof of
Theorem~\ref{T:loc_weak-geod}:
\begin{itemize}[leftmargin=30pt]
  \item Let $\gamma: [0,T]\lrarw W\cap \Omega$ be a $(1,\kappa)$-almost-geodesic relative to $K_{\Omega}$.
  The hypotheses above allow us to appeal to \cite[Lemma~3.3]{masanta:vdekdcm24}, which establishes that:
  \[
    l_{U\cap \Omega}(\gamma|_{[s,t]}) \leq K_{U\cap \Omega}(\gamma(s), \gamma(t)) + \kappa_0
    \quad \text{for all $[s,t]\subseteq [0,T]$.}
  \]
  \item The previous step establishes that $\gamma$ is a $(1,\kappa_0)$-chord-arc curve. The theorem
  now follows from part~$(a)$ of Theorem~\ref{T:C-A_A-G}.
\end{itemize}

An application of Theorem~\ref{T:loc_weak-geod} is the answer to the question stated above in italics. While
not explicitly stated in \cite{masanta:vdekdcm24}, a suitable adaptation of Theorem~\ref{T:loc_weak-geod}
is used to provide a complete answer; see \cite[Theorem~1.6]{masanta:vdekdcm24}.
\medskip

\section{Propositions concerning visibility conditions}\label{S:prop_visi}
This section is dedicated to establishing a couple of propositions that are essential to proving
Theorem~\ref{T:equiv}. In order to state these propositions, we need to fix some notation and to state a
definition, which makes sense on any metric space.
\smallskip

Let $(X, d)$ be a metric space. For any
$x,y,o\in X$, we define the \emph{Gromov product} $(x|y)_o$ by $(x|y)_o :=(d(x,o)+d(y,o)-d(x,y))/2$. Now,
specializing to $\Omega\varsubsetneq X$, $X$ and $\Omega$ as in Theorem~\ref{T:equiv}: we shall denote the
Gromov product with the underlying distance being $K_{\Omega}$ by $(x|y)_o^{\Omega}$.
\smallskip

\emph{Some} of the cues to the results below were obtained from Section~2 of
\cite{braccinikolovthomas:vkgcdrp22} by Bracci~\emph{et al.} The most relevant results in
\cite[Section~2]{braccinikolovthomas:vkgcdrp22} are stated for bounded, Kobayashi complete domains in
$\Cn$. We extend these to Kobayashi complete domains in general complex manifolds. The connection
between our extended results and \cite{braccinikolovthomas:vkgcdrp22} is elaborated upon in the
proof of Proposition~\ref{P:BNT-type_1}.
\smallskip

Having introduced the notation required, we can present the following two propositions. 

\begin{proposition}\label{P:BNT-type_1}
Let $X$ be a complex manifold and let $\Omega\varsubsetneq X$ be a Kobayashi complete domain.
\begin{itemize}[leftmargin=25pt]
  \item[$(a)$] Let $p,q\in\bdy\Omega$, $p\neq q$. If the pair $(p,q)$ satisfies the geodesic
  visibility condition, then for any $o\in \Omega$,
  \[
    \limsup_{(x,y)\to (p,q)} (x|y)_o^{\Omega} < \infty.
  \]
  \item[$(b)$] Suppose that for some (hence any)
  $o\in \Omega$, $\limsup_{(x,y)\to (p,q)} (x|y)_o^{\Omega} < \infty$ for any $p,q\in\bdy\Omega$, $p\neq q$. Then,
  $\bdy\Omega$ is weakly visible. 
\end{itemize}
\end{proposition}
\begin{proof}
The poof of part~(a) is essentially the same as that of \cite[Proposition~2.4]{braccinikolovthomas:vkgcdrp22}.
While this result has been established for Kobayashi complete domains in $\Cn$, it extends verbatim to our
setting because the proof of \cite[Proposition~2.4]{braccinikolovthomas:vkgcdrp22} relies just on the definition
of a geodesic and on viewing $\Omega$ merely as a metric space: namely $(\Omega, K_{\Omega})$.
\smallskip

We therefore turn to part~$(b)$. It is, this time, not analogous to a result in
\cite[Section~2]{braccinikolovthomas:vkgcdrp22}. In the closest analogue to part~$(b)$ in
\cite[Section~2]{braccinikolovthomas:vkgcdrp22}, $\Omega$ is relatively compact, which is an assumption
we do not make. Thus, a detailed argument is called for.
Let $p,q\in\bdy\Omega$, $p\neq q$. Fix $\kappa\geq 0$. There exists two neighbourhoods $U_p$ of $p$ and $U_q$
of $q$ in $X$ such that $U_p$, $U_q$ are relatively compact in $X$, and such that
$\overline{U_p}\cap\overline{U_q}=\emptyset$. Let $W_p$ and $V_p$ be neighbourhoods of $p$ in $X$ such that
$W_p\Subset V_p\Subset U_p$. 
\medskip

\noindent{{\textbf{Claim.}}}
\emph{There exists a compact set $K\subset \Omega$ such that for every $(1,\kappa)$-almost-geodesic 
$\gamma:[0,T]\lrarw\Omega$, and such that $\gamma(0)\in W_p\cap\Omega$,
$\gamma(T)\in U_q\cap\Omega$, we have $\langle\gamma\rangle\cap K\neq\emptyset$.}
\smallskip

\noindent{\emph{Proof of the Claim.}
If possible let the claim be false. Fix a Hermitian metric
$\hrm$ on $X$ and let $d_{\hrm}$ be the distance induced
by $\hrm$ on $X$. Let $(K_\nu)_{\nu\geq 1}$ be an exhaustion by
compacts of $\Omega$. Then, there exist sequences $(x_\nu)_{\nu\geq 1}\subset W_p\cap\Omega$,
$(y_\nu)_{\nu\geq 1}\subset U_q\cap\Omega$, and $(1,\kappa)$-almost-geodesics 
$\gamma_\nu:[0,T_\nu]\lrarw\Omega$ joining $x_\nu$ and $y_\nu$ for each $\nu\geq 1$, such that
$\langle\gamma_\nu\rangle\cap
K_\nu=\emptyset$. Let $r_\nu>0$ and ${y'_\nu}=\gamma_\nu(r_\nu)$ be such that ${y'_\nu}\in
(U_p\setminus\overline{V_p})\cap\Omega$, and
such that the image of ${\sigma}_{\nu}:=\gamma_\nu{\mid}_{[0,r_\nu]}$ is contained in $U_p$.
By \cite[Lemma~2.9]{masanta:vdekdcm24}, and as we can relabel the subsequences given by it,
there exist points
$\xi\in \overline{W_p}\cap\bdy\Omega$ and $\eta\in\overline{U_p}\cap\bdy\Omega$
such that $x_{\nu}\to\xi$, $y'_{\nu}\to\eta$, and such that
\begin{align}\label{E:distance-boundary}
 \lim_{\nu\to\infty}\sup_{z\in\langle\sigma_\nu\rangle}d_{\hrm}(z,\bdy\Omega)=0.
\end{align}
By construction, $\xi\neq\eta$. Fix $o\in\Omega$. Let $W_{p}^1$, $W_{p}^2$ be neighbourhoods of $p$ in $X$ such that 
$W_p\Subset W_p^1\Subset W_p^2\Subset V_p\Subset U_p.$ Pick $z_\nu\in\langle\sigma_\nu\rangle$ such that
$z_\nu\in W_p^2\setminus\overline{W_p^1}$. Let $s_\nu\in[0,r_\nu]$ be such that $\sigma_\nu(s_\nu)=z_\nu$ for each $\nu\geq 1$.
By \eqref{E:distance-boundary}, there exists a subsequence  $(\nu_k)_{k\geq 1}\subset \N$ and a point
$\tau\in\overline{W_p^2}\cap\bdy\Omega$ such that
$\sigma_{\nu_k}(s_{\nu_k})\to\tau$. By construction, $\tau\neq\xi$ and $\tau\neq\eta$.
Since each $\sigma_\nu$ is a $(1,\kappa)$-almost-geodesic, for each
$\nu\geq 1$ we have,
\begin{align}
  K_\Omega(x_\nu,o)+K_\Omega(o,y'_\nu)
  &\geq K_\Omega(x_\nu,y'_\nu) \notag\\
  &\geq r_\nu-\kappa=(r_\nu-s_\nu)+(s_\nu-0)-\kappa\notag\\
  &\geq K_\Omega(y'_\nu,z_\nu)+K_\Omega(z_\nu,x_\nu)-3\kappa\notag\\
  \implies 2(x_\nu|z_\nu)_o^{\Omega}+2(y'_\nu|z_\nu)_o^{\Omega} &\geq 2K_\Omega(z_\nu,o)-3\kappa.\label{E:gromov-product-inequality}
\end{align}
By hypothesis, there exists a constant $C>0$ such that $\limsup_{k\to\infty} (x_{\nu_k}|z_{\nu_k})_o^{\Omega}+
\limsup_{k\to\infty} (z_{\nu_k}|y'_{\nu_k})_o^{\Omega} < C$. Therefore, by 
\eqref{E:gromov-product-inequality} we have
\begin{align*}
  2\limsup_{k\to\infty}(x_{\nu_k}|z_{\nu_k})_o^{\Omega}+2\limsup_{k\to\infty}(y'_{\nu_k}|z_{\nu_k})_o^{\Omega} &<2C\\
  \implies\limsup_{k\to\infty}2K_\Omega(z_{\nu_k},o)-3\kappa &<2C\\
  \implies\limsup_{k\to\infty}2K_\Omega(z_{\nu_k},o) &<2C+3\kappa.
\end{align*}
Therefore, since $\Omega$ is Kobayashi complete, it follows from the Hopf--Rinow theorem that $\tau\in\Omega$, a contradiction. Hence the
claim.
\hfill $\btl$
} 
\smallskip

By this claim, we have established part~$(b)$ and the result follows.

\end{proof}

\begin{proposition}\label{P:weak-geodesic-visible}
Let $X$ be a complex manifold and let $\Omega\varsubsetneq X$ be a Kobayashi complete domain.
\begin{itemize}[leftmargin=25pt]
  \item[$(a)$] Let $p,q\in\bdy\Omega$, $p\neq q$. If the pair $(p,q)$ satisfies the weak visibility condition, then
  for any $o\in \Omega$,
  \[
    \limsup_{(x,y)\to (p,q)} (x|y)_o^{\Omega} < \infty.
  \]
  \item[$(b)$] Suppose that for some (hence any)
  $o\in \Omega$, $\limsup_{(x,y)\to (p,q)} (x|y)_o^{\Omega} < \infty$ for any $p,q\in\bdy\Omega$, $p\neq q$. Then,
  $\bdy\Omega$ is geodesically visible.
\end{itemize}
\end{proposition}

The argument for part~$(b)$ of Proposition~\ref{P:weak-geodesic-visible} is just a simpler version of the
argument for part~$(b)$ of Proposition~\ref{P:BNT-type_1}. While we do not have a version of 
\cite[Lemma~2.9]{masanta:vdekdcm24} for geodesics to cite, such a version is as obvious as the
latter lemma. As for the proof of part~$(a)$: using the definition of a $(1,\kappa)$-almost-geodesic, one
can adapt the same argument as for \cite[Proposition~2.4]{braccinikolovthomas:vkgcdrp22}, taking care to
incorporate the term $\kappa$ in the relevant estimates. Given these observations,
we shall skip a detailed proof of Proposition~\ref{P:weak-geodesic-visible}.
\medskip

\section{The proofs of Theorems~\ref{T:C-A_A-G} and~\ref{T:equiv}}\label{S:appl_proofs}
We begin with the proof of Theorem~\ref{T:C-A_A-G}.

\begin{proof}[The proof of Theorem~\ref{T:C-A_A-G}]
We shall first prove part~$(a)$. The first step involves showing that $\sigg$\,---\,as given by
Theorem~\ref{T:unit-speed}\,---\,is a $(\lambda,\kappa)$-chord-arc curve. Let $\Gamma$ be as given by
\eqref{E:Gamma}. Our first sub-step is to show that $\Gamma$ is a $(\lambda,\kappa)$-chord-arc curve. Since
$\gamma$ is a $(\lambda,\kappa)$-chord-arc curve, we only need to consider the case when 
$\mathscr{S}(\gamma)$ contains intervals of positive length. Let $\mathcal{I}$ denote the set of intervals in 
$\mathscr{S}(\gamma)$ of positive length. We need to show that 
$\gaux:[0,\tau]\lrarw X$\,---\,which is given by Proposition~\ref{P:aux}\,---\,is a
$(\lambda,\kappa)$-chord-arc curve. By the description of $\gaux$ in Section~\ref{S:key},
$\gamma=\gaux\circ A$, where $A:[0,T]\lrarw[0,\tau]$ is a
continuous, surjective, monotone increasing, piecewise affine
function such that $A\equiv a$ on $[a,b]$ if $[a,b] = \overline{I}$ and
$I \in\mathcal{I}$, and $A$ is strictly increasing on
$[0,T]\setminus\cup_{I\in\mathcal{I}} I$. Fix $s<t$, $s,t\in[0,\tau]$. Let $s',t'\in[0,T]$ be such that
$A(s')=s$ and
$A(t')=t$. Clearly, $\gamma(s')=\gaux(s)$ and $\gamma(t')=\gaux(t)$. Therefore, since $\gamma$ is constant on
$I$ for every $I\in\mathcal{I}$, from the above properties of $A$ it follows that
\begin{align}
  l_X(\gaux{\mid}_{[s,t]})&=l_X(\gamma{\mid}_{[s',t']})\notag\\
  \implies l_X(\gaux{\mid}_{[s,t]})&\leq \lambda K_X(\gaux(s),\gaux(t))+\kappa. \label{E:Gamma-chord-arc}
\end{align}
Since, by Proposition~\ref{P:aux}, $\gaux$ is absolutely continuous, this establishes that $\Gamma$ is a
$(\lambda,\kappa)$-chord-arc curve. By Proposition~\ref{P:unit_speed}-$(b)$, we have
$l_X(\Gamma)<\infty$. Since $\sigg=\Gamma\circ G^{-1}$, where $G$ is as given by \eqref{E:Gamma_length},
\eqref{E:chain_rule} tells us that for every $s<t$, $s,t\in [0,l_X(\Gamma)]$, we have
\begin{align}
  \int_{s}^{t} k_{X}(\sigg(u); \sigg'(u))\,du&= \int_{G^{-1}(s)}^{G^{-1}(t)}k_{X}\big(\Gamma(x);
   \Gamma'(x)\big)\,dx\notag\\
  \implies l_X(\sigg{\mid}_{[s,t]})&=l_X(\Gamma{\mid}_{[G^{-1}(s),G^{-1}(t)]})\notag\\&\leq
  \lambda K_X\big(\Gamma(G^{-1}(s)),\Gamma(G^{-1}(t))\big)+\kappa\label{E:sigg-chord-arc}\\
  &=\lambda K_X(\sigg(s),\sigg (t))+\kappa\notag, 
\end{align}
where the inequality in \eqref{E:sigg-chord-arc} follows from the fact that $\Gamma$ is a 
$(\lambda,\kappa)$-chord-arc curve. Since $\sigg$ is absolutely continuous, this
establishes that $\sigg$ is a $(\lambda,\kappa)$-chord-arc curve. 
\smallskip

Having shown that $\sigg$ is a $(\lambda,\kappa)$-chord-arc curve, given the other properties of
$\sigg$ stated in Theorem~\ref{T:unit-speed}, the remainder of the argument for part~$(a)$ is
exactly the pair of estimates given after \cite[Definition~2.3]{nikolovoktenthomas:lgnvwrKdc24}
(with $\sigg$ in place of $\gamma$).
\smallskip

We shall now prove part~$(b)$. Since $\gamma$ is a $(\lambda,\kappa)$-almost-geodesic, from condition~$(b)$ of
Definition~\ref{D:almost-geodesic} it follows that for all $s<t$, $s,t\in[0,T]$ 
\begin{align*}\label{E:length-inequality}
  l_X(\gamma{\mid}_{[s,t]})&\leq\lambda|s-t|\notag\\
  \implies l_X(\gamma{\mid}_{[s,t]})&\leq\lambda^2(K_X(\gamma(s),\gamma(t))+\kappa),
\end{align*}
where the last inequality follows from condition~$(a)$ of Definition~\ref{D:almost-geodesic}. This
establishes $(b)$.
\end{proof}

In view of the propositions in Section~\ref{S:prop_visi}, we are in a position to give

\begin{proof}[The proof of Theorem~\ref{T:equiv}]
Let $\bdy\Omega$ be weakly visible. Then, from Proposition~\ref{P:weak-geodesic-visible} it follows that
$\bdy\Omega$ is geodesically visible. Conversely, assume $\bdy\Omega$ is geodesically visible. Then, by
Proposition~\ref{P:BNT-type_1}, we have that $\bdy\Omega$ is weakly visible.
\smallskip

Now, let $\bdy\Omega$ be geodesically visible. Let $p,q\in\bdy\Omega$ such that $p\neq q$. There exist two
neighbourhoods $U_p$ of $p$ and $U_q$ of $q$ in $X$ such that
$\overline{U_p}\cap\overline{U_q}=\emptyset$, and a compact set $K\varsubsetneq \Omega$ associated with
$(U_p, U_q)$ by the definition of geodesic visibility.
Let $\gamma:[0,T]\lrarw\Omega$ be an arbitrary hyperbolic geodesic
such that $\gamma(0)\in U_p\cap\Omega$ and $\gamma(T)\in U_q\cap\Omega$. Then, by definition, $\gamma$ is a $(1,0)$-chord-arc curve. 
Therefore, by Theorem~\ref{T:C-A_A-G}-$(a)$, there exists a path $\sigma_\gamma$ in $\Omega$ with
$\langle\gamma\rangle = \langle\sigg\rangle$
such that $\sigg$ is a $(1,0)$-almost-geodesic. Clearly, $\sigg$ is a geodesic. Therefore, since $\bdy\Omega$ is
geodesically visible, 
$\langle\gamma\rangle\cap K\neq \emptyset$. Hence, $\bdy\Omega$ is hyperbolically visible. 
\smallskip

Conversely, assume $\bdy\Omega$ is hyperbolically visible. Let $p,q\in\bdy\Omega$ such that $p\neq q$. 
There exist two relatively compact neighbourhoods $U_p$ of $p$ and $U_q$
of $q$ in $X$ such that
$\overline{U_p}\cap\overline{U_q}=\emptyset$, and a compact set $K\varsubsetneq \Omega$ associated with
$(U_p, U_q)$ by the definition of hyperbolic visibility. 
Fix $o\in\Omega$. Let $x\in U_p\cap\Omega$ and $y\in U_q\cap\Omega$. By Proposition~\ref{P:l_minim}, there exists a
hyperbolic geodesic $\gamma$ in $\Omega$ joining $x$ and $y$. By assumption, $\langle\gamma\rangle\cap K\neq\emptyset$; let
$z\in\langle\gamma\rangle\cap K$. Since $z\in\langle\gamma\rangle$, from Definition~\ref{D:hyp-geo} it follows that
\begin{align*}
  K_\Omega(x,y)=K_\Omega(x,z)+K_\Omega(z,y)&\geq K_\Omega(x,o)+K_\Omega(y,o)-2K_\Omega(z,o)\\
  \implies (x|y)_o^\Omega&\leq \sup_{\zeta\in K}K_\Omega(\zeta,o).
\end{align*}
Hence, $\limsup_{(x,y)\to (p,q)} (x|y)_o^{\Omega}\leq \sup_{\zeta\in K}K_\Omega(\zeta,o) < \infty$. As
$p,q\in \bdy\Omega$ were arbitrarily chosen, by Proposition~\ref{P:weak-geodesic-visible}-$(b)$, 
$\bdy\Omega$ is geodesically visible. Hence, the result follows.
\end{proof}
\smallskip

\section*{Acknowledgements}
R. Masanta is supported by the Theoretical Statistics and Mathematics Unit at the Indian Statistical
Institute, Bangalore Centre. G. Bharali is supported by a DST-FIST grant (grant no.~DST FIST-2021
[TPN-700661]). A part of the work by R. Masanta was done at the Indian Institute of Science; she acknowledges
the support of the DST from a DST-FIST grant (grant no.~DST FIST-2021 [TPN-700661]).
We are very grateful to the anonymous referee for their suggestions on improving the
exposition in several places in this paper.
\smallskip

\end{document}